\journal{arXiv}
\date{}
\newtheorem{lemma}{Lemma}
\newtheorem{proposition}{Proposition}
\newtheorem{Result}{Result}
\DeclarePairedDelimiter\ceil{\lceil}{\rceil}
\DeclarePairedDelimiter\floor{\lfloor}{\rfloor}
\newenvironment{tight_enumerate}{
\begin{enumerate}[(i)]
  \setlength{\itemsep}{0pt}
  \setlength{\parskip}{0pt}
}{\end{enumerate}} 
\begin{document}

\begin{frontmatter}



\title{On arbitrarily underdispersed Conway--Maxwell--Poisson distributions}


\author{Alan Huang}

\address{School of Mathematics and Physics, University of Queensland, QLD, Australia}

\begin{abstract}
We show that the Conway--Maxwell--Poisson distribution can be arbitrarily underdispersed when parametrized via its mean. 
More precisely, if the mean $\mu$ is an integer then the limiting distribution is a unit probability mass at $\mu$. If the mean $\mu$ is not an integer then the limiting distribution is a shifted Bernoulli on the two values $\floor{\mu}$ and $\ceil{\mu}$ with probabilities 
equal to the fractional parts of $\mu$. In either case, the limiting distribution is the most underdispersed discrete distribution possible for any given mean. 
This is currently the only known generalization of the Poisson distribution 
exhibiting this property. 
Four practical implications are discussed, each adding to the claim that the (mean-parametrized) Conway--Maxwell--Poisson distribution should be considered the default model for underdispersed counts. We suggest that all future generalizations of the Poisson distribution be tested against this property.

\end{abstract}



\begin{keyword}
underdispersion \sep discrete distribution \sep shifted Bernoulli \sep limiting distribution


\end{keyword}

\end{frontmatter}


\section{Introduction}

The Conway--Maxwell--Poisson (CMP) distribution
is a generalization of the Poisson distribution that 
has seen a recent revival in popularity for the modelling of both underdispersed and overdispersed counts \citep[see, e.g.,][]{SMKBB:2005, SS:2010, LGD:2010, FGD:2019, SP:2020}. The probability mass function (pmf) of the CMP distribution is given by
\begin{equation}
P(Y = y) = \frac{1}{Z(\lambda, \nu)} \frac{\lambda^y}{(y!)^\nu} \ , \quad y=0,1,2,\ldots,
\label{eq:pmf}
\end{equation}
where $\lambda \ge 0$ is a rate parameter, $\nu \ge 0$ is a dispersion parameter, and $Z(\lambda, \nu) = \sum_{y=0}^\infty \lambda^y/(y!)^\nu $ is a normalizing function. The CMP distribution can also be characterized via its ratio of successive probabilities,
\begin{equation}
\frac{P(Y = y-1)}{P(Y = y)} = \frac{y^\nu}{\lambda} \ , \quad y = 1,2,3,\ldots \ .
\label{eq:successive}
\end{equation}

A key feature of the CMP distribution is that it forms a continuous bridge between some well-known distributions, passing through the overdispersed geometric($\lambda$) distribution when $\nu = 0$, the equidispersed Poisson$(\lambda)$ distribution when $\nu = 1$, and the underdispersed Bernoulli$(\lambda/(1+\lambda))$ distribution as $\nu \to\infty$ \citep[][p. 129]{SMKBB:2005}. These results are for fixed rate $\lambda$.

This note generalizes the last result by allowing the rate $\lambda = \lambda(\mu,\nu)$ to vary with $\nu$ in such a way that the mean $\mu$ of the distribution remains fixed. Under this mean parametrization, we have the following asymptotic behaviour for arbitrarily small underdispersion.

\begin{proposition}
\label{eq:prop1}
As $\nu \to \infty$, the CMP distribution with mean $\mu \ge 0$ converges to 
\begin{tight_enumerate}
\item a unit point mass at $\mu$ if $\mu$ is integer , i.e., $P(Y = \mu) \to 1$.
\item a shifted Bernoulli on the two integers $\floor{\mu}$ and $\ceil{\mu}$  if $\mu$ is non-integer, with probabilities equal to the fractional parts of $\mu$, i.e., $P(Y = \floor{\mu}) \to 1-\Delta$ and $P(Y = \ceil{\mu})\to \Delta$, where $\Delta = \mu - \floor{\mu}$.
\end{tight_enumerate}
\end{proposition}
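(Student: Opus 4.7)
The plan is to reparametrize by $\alpha_\nu := \lambda_\nu^{1/\nu}$, where $\lambda_\nu$ denotes the rate producing mean $\mu$ at dispersion $\nu$, so that the pmf takes the transparent form $P(Y=y)\propto (\alpha_\nu^y/y!)^\nu$. The shape function $g(y)=\alpha^y/y!$ is unimodal on $\{0,1,2,\dots\}$ with mode at $\lfloor\alpha\rfloor$, because $g(y)/g(y-1)=\alpha/y$ crosses $1$ at $y=\alpha$; if $\alpha$ is a positive integer, then $\alpha-1$ and $\alpha$ are tied modes. Raising $g$ to the $\nu$-th power concentrates all mass on this mode (or modes) as $\nu\to\infty$, so the limit distribution must live on at most two consecutive integers.

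I would first show that $\alpha_\nu$ stays bounded in $\nu$: if $\alpha_\nu\to\infty$ along a subsequence, the pmf concentrates on $\lfloor\alpha_\nu\rfloor\to\infty$, forcing $E[Y]\to\infty$ and contradicting the fixed mean $\mu$. Every subsequence therefore admits a further subsequence with $\alpha_\nu\to\alpha^*\in[0,\infty)$. Using the ratio identity (\ref{eq:successive}), $P(Y=y-1)/P(Y=y)=(y/\alpha_\nu)^\nu$ tends to $0$ for $y<\alpha^*$ and to $\infty$ for $y>\alpha^*$, which confines the limiting support to $\{\lfloor\alpha^*\rfloor\}$ when $\alpha^*$ is not a positive integer, and to $\{\alpha^*-1,\alpha^*\}$ when it is. In the latter boundary case I would extract a further subsequence along which $(\alpha_\nu/\alpha^*)^\nu$ converges to some $r^*\in[0,\infty]$, which pins down the Bernoulli split via $P(Y=\alpha^*)/P(Y=\alpha^*-1)\to r^*$.

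The mean constraint then selects the admissible limit. For non-integer $\mu$, the only consistent option is $\alpha^*=\lceil\mu\rceil$ with $r^*=\Delta/(1-\Delta)$, yielding the stated shifted Bernoulli with $P(Y=\floor{\mu})\to 1-\Delta$ and $P(Y=\ceil{\mu})\to\Delta$. For integer $\mu=n$, the admissible possibilities $\alpha^*\in(n,n+1)$, or $\alpha^*=n$ with $r^*=\infty$, or $\alpha^*=n+1$ with $r^*=0$, all collapse to the unit point mass at $n$. Since every subsequential limit coincides, the full sequence converges to the claimed distribution.

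The delicate technical step is the boundedness of $\alpha_\nu$ together with the companion requirement that $\lim_\nu E[Y]$ survives as the mean of the weak limit, so that the constraint $E[Y]=\mu$ really determines the limiting distribution. Both are handled by the log-concavity of the CMP pmf, evident from the monotone decrease in $y$ of $P(Y=y)/P(Y=y-1)=\lambda/y^\nu$; this produces uniform geometric tail decay beyond the mode, which in turn gives uniform integrability of $\{Y\}$ over $\nu$ and closes the argument.
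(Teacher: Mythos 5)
Your argument is correct, but it takes a genuinely different route from the paper. The paper works directly with the mean function $m(\lambda)=\sum_y (y-\mu)\lambda^y/(y!)^\nu$: it splits the sum into a negative part, a positive part and a negligible remainder, identifies the dominant term of each via the monotonicity of $\mu^y/y!$, and thereby proves explicit sandwich bounds $a\mu^\nu<\lambda<b(\mu+1)^\nu$ (and the tighter $\Delta\ceil{\mu}^\nu<\lambda<(1-\Delta)^{-1}\ceil{\mu}^\nu$ for non-integer $\mu$); the proposition then follows from the successive-ratio identity. You instead reparametrize by $\alpha_\nu=\lambda_\nu^{1/\nu}$ and run a soft compactness argument: boundedness of $\alpha_\nu$, subsequential limits $\alpha^*$, confinement of the limiting support to one or two consecutive integers via $(y/\alpha_\nu)^\nu$, and then the mean constraint (transferred to the limit by uniform integrability from the super-geometric tail) to pin down the unique admissible limit. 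Your route is cleaner and treats the integer and non-integer cases uniformly; it also makes explicit the uniform-integrability step that the paper leaves implicit when it asserts the limiting two-point distribution must still have mean $\mu$ (and your mean-constraint argument is actually tighter than the paper's claim that a bounded ratio ``must converge to some constant''). What it gives up is the explicit, quantitative bounds on $\lambda(\mu,\nu)$, which the paper exploits separately for fast numerical root-finding. One step you should tighten: ``the pmf concentrates on $\floor{\alpha_\nu}$, forcing $E[Y]\to\infty$'' is not literally concentration for finite $\nu$; what you need is that the pmf is nondecreasing on $\{0,\dots,\floor{\alpha_\nu}\}$ (since $P(Y=y-1)/P(Y=y)=(y/\alpha_\nu)^\nu\le 1$ there), which gives $P(Y\ge \floor{\alpha_\nu}/2)$ bounded below by a universal constant and hence $E[Y]\gtrsim\alpha_\nu/4$. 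With that line added, the argument is complete.
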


The convergence paths of the two limiting cases are visualised in Figure \ref{fig:pmfs}. We see, for example, that an increasingly underdispersed CMP distribution with mean $\mu=4$ converges to a single probability mass at 4, while an increasingly underdispersed CMP distribution with mean $\mu=4.321$ converges to a shifted Bernoulli on values 4 and 5 with probabilities 0.679 and 0.321, respectively. In either case the limiting distribution is the most underdispersed discrete distribution possible for a given mean. To the best of our knowledge, this is currently the only known generalization of the Poisson distribution that exhibits this property.

\begin{figure}
\includegraphics[width = \textwidth]{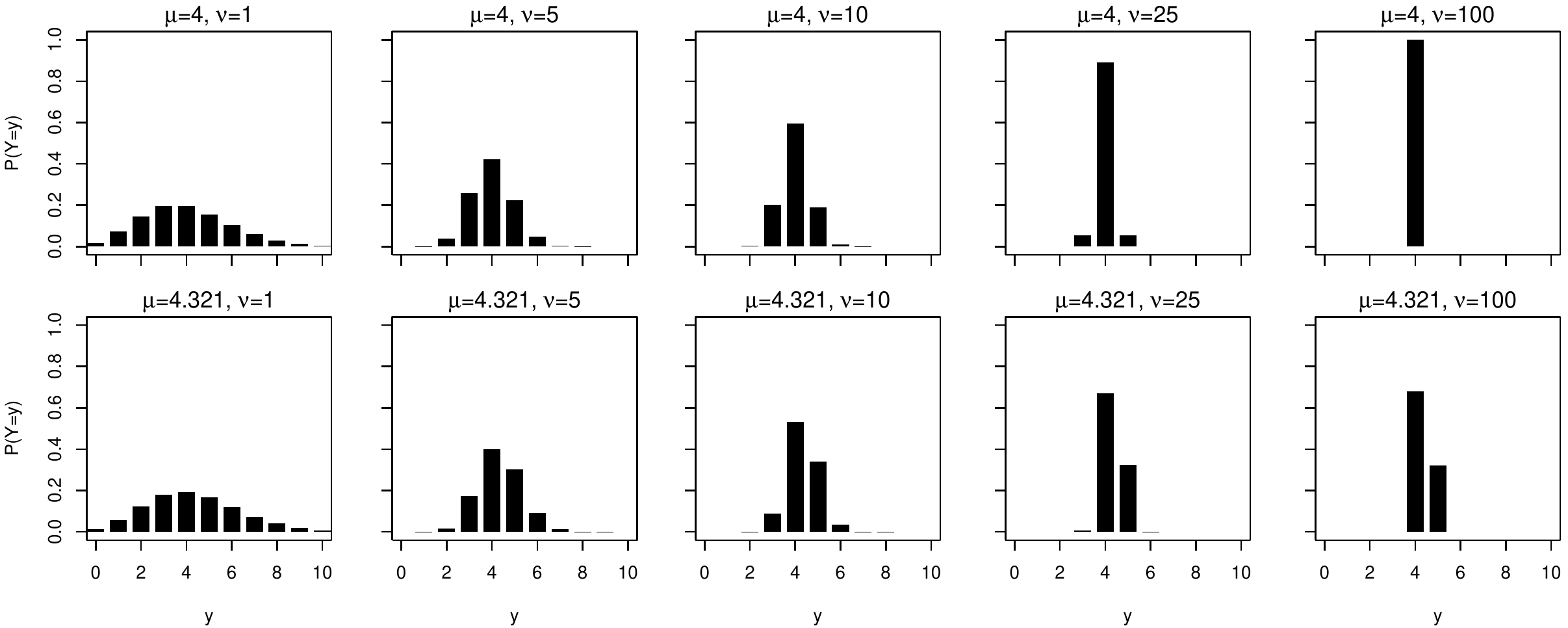}
\caption{pmfs of CMP distributions with means 4 and 4.321, and with dispersion increasing from $\nu = 1$ (Poisson) to $\nu=5, 10, 25$ and $100$ (severely underdispersed).}
\label{fig:pmfs}
\end{figure}

The proof of Proposition~\ref{eq:prop1} is provided in the Appendix and makes use of the following two lemmas which provide bounds on the rate $\lambda = \lambda(\mu, \nu)$ as $\nu$ gets arbitrarily large. The proofs of Lemmas~\ref{lem:1} and~\ref{lem:2} are also provided in the Appendix.

\begin{lemma}
\label{lem:1}
For any $\mu > 0$, the solution $\lambda = \lambda(\mu, \nu)$ to the mean constraint (\ref{eq:mconstraint}) satisfies $\lambda/\mu^\nu \to \infty$ and $\lambda/ (\mu+1)^\nu \to 0$ as $\nu \to \infty$.
\end{lemma}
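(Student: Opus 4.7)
The plan is to prove each bound by contradiction, using the successive-probability recursion $P(Y=y)/P(Y=y-1) = \lambda/y^\nu$ from (\ref{eq:successive}) to localise where the CMP mass can sit. To show $\lambda/\mu^\nu \to \infty$, I will assume the opposite: along some subsequence of $\nu$ one has $\lambda \le M\mu^\nu$ for a finite $M$. Then for every $y \ge \floor{\mu}$ the forward ratio $\lambda/(y+1)^\nu$ is bounded by $M(\mu/(\floor{\mu}+1))^\nu$, which is geometrically small since $\mu < \floor{\mu}+1$ in both the integer and non-integer cases. A standard geometric-series estimate will then yield $P(Y > \floor{\mu}) = o(1)$ together with an $o(1)$ bound on the upper-tail contribution to $E[Y]$, so $E[Y] \le \floor{\mu} + o(1)$.

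When $\mu$ is non-integer this already contradicts the mean constraint (\ref{eq:mconstraint}). For integer $\mu = m$ one needs a bit more, since $\floor{\mu} = m$ and $E[Y] = m$ is consistent with the estimate above. Here I plan to turn the recursion in the other direction: $P(Y=m) \le M\,P(Y=m-1)$ combined with $P(Y=m-1) + P(Y=m) \le 1$ forces $P(Y=m) \le M/(M+1)$, strictly below $1$. Together with the $o(1)$ upper tail this produces $E[Y] \le m - 1/(M+1) + o(1) < m$, again a contradiction.

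The bound $\lambda/(\mu+1)^\nu \to 0$ follows by an entirely symmetric argument. Supposing instead that $\lambda \ge \delta(\mu+1)^\nu$ for some $\delta > 0$ along a subsequence, the ratio $P(Y=y)/P(Y=y-1) \ge \delta((\mu+1)/\ceil{\mu})^\nu$ diverges for every $y \le \ceil{\mu}$, since $\mu+1 > \ceil{\mu}$. This forces $P(Y < \ceil{\mu}) = o(1)$ and hence $E[Y] \ge \ceil{\mu} + o(1)$, which exceeds $\mu$ when $\mu$ is non-integer. For integer $\mu = m$ the extra ingredient is $P(Y=m+1) \ge \delta\,P(Y=m)$, yielding $P(Y=m) \le 1/(1+\delta)$ and then $E[Y] \ge m + \delta/(1+\delta) - o(1) > m$.

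The main obstacle will be the integer case in both bounds. A crude concentration argument cannot rule out the distribution collapsing to a unit mass exactly at $\mu$, which would be perfectly consistent with $E[Y] = \mu$; so the critical point is to show that a bounded $\lambda/\mu^\nu$ (respectively, a $\lambda/(\mu+1)^\nu$ bounded below) must leak a nonvanishing share of mass onto the neighbouring integer and thereby shift the mean strictly. The geometric tail estimates needed elsewhere are routine bookkeeping.
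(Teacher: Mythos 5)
Your proof is correct, but it takes a genuinely different route from the paper. The paper argues directly on the moment function $m(\lambda)=\sum_{y}(y-\mu)\lambda^{y}/(y!)^{\nu}$: it evaluates $m$ at the trial values $a\mu^{\nu}$ and $b(\mu+1)^{\nu}$, uses the unimodality of $y\mapsto \mu^{y}/y!$ (Result~\ref{res:1}, proved via digamma inequalities) to identify the single dominant negative and positive terms of the series, compares their ratio, and invokes monotonicity of the mean in $\lambda$ (an exponential-family fact) to conclude the root lies between the two trial values for every $a,b>0$. You instead argue by contradiction through the successive-probability ratios $P(Y=y)/P(Y=y-1)=\lambda/y^{\nu}$: a bounded $\lambda/\mu^{\nu}$ makes the forward ratios beyond $\floor{\mu}$ geometrically small, killing the upper tail of $E[Y]$, while a $\lambda/(\mu+1)^{\nu}$ bounded below makes the backward ratios up to $\ceil{\mu}$ blow up, killing the lower tail; in each case the mean constraint is violated. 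Your handling of the delicate integer case is the right idea and the arithmetic checks out: $P(Y=\mu)\le M P(Y=\mu-1)$ forces $P(Y=\mu)\le M/(M+1)$ and hence $E[Y]\le \mu-1/(M+1)+o(1)$, and symmetrically $P(Y=\mu+1)\ge\delta P(Y=\mu)$ forces $E[Y]\ge \mu+\delta/(1+\delta)-o(1)$; this is precisely the ``leakage'' that a crude concentration bound misses. What each approach buys: yours is more elementary and uniform (no Result~\ref{res:1}, no digamma bounds, no appeal to exponential-family monotonicity or uniqueness of the root, and integer and non-integer $\mu$ are handled in one framework), whereas the paper's dominant-term analysis of $m$ is reused almost verbatim in Lemma~\ref{lem:2} to extract the sharper constants $\Delta$ and $(1-\Delta)^{-1}$ that the proof of Proposition~\ref{eq:prop1} actually needs to pin down the limiting Bernoulli probabilities. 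Your argument proves Lemma~\ref{lem:1} but would not by itself deliver those refined bounds, so a separate argument for Lemma~\ref{lem:2} would still be required.
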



\begin{lemma}
\label{lem:2}
If $\mu$ is non-integer, the bounds on the solution $\lambda = \lambda(\mu, \nu)$ to the mean constraint (\ref{eq:mconstraint}) can be tightened to $\lambda > \Delta \ceil{\mu}^\nu$ and $\lambda < (1-\Delta)^{-1} \ceil{\mu}^\nu$ for sufficiently large $\nu$.
\end{lemma}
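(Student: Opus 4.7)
The plan is to turn the mean equation into a balance identity around the two integers $\floor{\mu}$ and $\ceil{\mu}$ flanking $\mu$, show that both sides of the balance are dominated by their respective boundary terms, and then extract the two bounds via a short contradiction argument. Writing $a_y = \lambda^y/(y!)^\nu$ for the unnormalised pmf values, the mean constraint $\E[Y]=\mu$ reads $\sum_{y=0}^\infty (y-\mu)\, a_y = 0$, which I split at $\mu$ to obtain
$$A\;:=\;\sum_{y=0}^{\floor{\mu}}(\mu - y)\, a_y\;=\;\sum_{y=\ceil{\mu}}^\infty (y-\mu)\, a_y\;=:\;B.$$
Since $\mu$ is non-integer, the boundary contributions to the two sides are $\Delta\, a_{\floor{\mu}}$ and $(1-\Delta)\, a_{\ceil{\mu}}$ respectively, and crucially their ratio is $a_{\ceil{\mu}}/a_{\floor{\mu}} = \lambda/\ceil{\mu}^\nu$, which is exactly the quantity Lemma~\ref{lem:2} seeks to sandwich.

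The next step is to establish the boundary dominations
$$A \;=\; \bigl(\Delta + o(1)\bigr)\, a_{\floor{\mu}}, \qquad B \;=\; \bigl(1-\Delta + o(1)\bigr)\, a_{\ceil{\mu}}\qquad(\nu\to\infty).$$
Here I would use the telescoped identity $a_y/a_{\floor{\mu}} \le (\floor{\mu}^\nu/\lambda)^{\floor{\mu}-y}$ for $y<\floor{\mu}$ and $a_y/a_{\ceil{\mu}} \le (\lambda/(\ceil{\mu}+1)^\nu)^{y-\ceil{\mu}}$ for $y>\ceil{\mu}$. Lemma~\ref{lem:1} supplies both $\floor{\mu}^\nu/\lambda \to 0$ (from $\lambda/\mu^\nu\to\infty$ and $\floor{\mu}<\mu$) and $\lambda/(\ceil{\mu}+1)^\nu \to 0$ (from $\lambda/(\mu+1)^\nu\to 0$ and $\ceil{\mu}+1>\mu+1$), so each geometric tail vanishes and the dominations follow.

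With the dominations in hand, each bound then follows from $A=B$ by a short contradiction. If $a_{\ceil{\mu}} \le \Delta\, a_{\floor{\mu}}$ held for arbitrarily large $\nu$, then $A\ge \Delta\, a_{\floor{\mu}}\ge a_{\ceil{\mu}}$ while $B = (1-\Delta+o(1))\, a_{\ceil{\mu}} < a_{\ceil{\mu}}$ eventually, contradicting $A=B$; rearranging gives $\lambda > \Delta\, \ceil{\mu}^\nu$ for large $\nu$. If $(1-\Delta)\, a_{\ceil{\mu}}\ge a_{\floor{\mu}}$ held for arbitrarily large $\nu$, then $B\ge a_{\floor{\mu}}$ while $A = (\Delta+o(1))\, a_{\floor{\mu}} < a_{\floor{\mu}}$ eventually, again contradicting $A=B$; this yields $\lambda < (1-\Delta)^{-1}\ceil{\mu}^\nu$ for large $\nu$.

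The main obstacle will be the boundary-domination step for $B$: that sum is infinite, so it needs a uniform geometric majorant on $a_y/a_{\ceil{\mu}}$ rather than just pointwise smallness for each $y$. This is precisely where the sharper Lemma~\ref{lem:1} bound $\lambda/(\mu+1)^\nu\to 0$, together with the strict slack between $\mu+1$ and $\ceil{\mu}+1$ afforded by non-integrality of $\mu$, does the essential work.
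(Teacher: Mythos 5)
Your argument is correct, and it takes a genuinely different route from the paper. The paper never touches the actual solution $\lambda(\mu,\nu)$ directly: it evaluates the moment function $m(\lambda)=\sum_y(y-\mu)\lambda^y/(y!)^\nu$ at the two test points $\lambda=\Delta\ceil{\mu}^\nu$ and $\lambda=(1-\Delta)^{-1}\ceil{\mu}^\nu$, shows via dominant-term comparisons (Result~\ref{res:2}) that $m$ is negative at the first and positive at the second, and then invokes monotonicity of the mean in $\lambda$ to trap the root between them. You instead split the mean constraint at $\mu$ into the balance $A=B$ and show each side collapses onto its boundary term; this buys you more than the paper's proof does, since $(\Delta+o(1))a_{\floor{\mu}}=(1-\Delta+o(1))a_{\ceil{\mu}}$ gives the sharper conclusion $\lambda/\ceil{\mu}^\nu\to\Delta/(1-\Delta)$, from which the two stated inequalities and indeed the limiting probabilities in Proposition~\ref{eq:prop1}(ii) drop out immediately. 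Your handling of the infinite tail of $B$ via the uniform geometric majorant $(\lambda/(\ceil{\mu}+1)^\nu)^{y-\ceil{\mu}}$ is exactly the right fix for the issue you flag.

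The one structural caveat is your reliance on Lemma~\ref{lem:1} for \emph{non-integer} $\mu$: the paper only proves Lemma~\ref{lem:1} directly for integer $\mu$ and explicitly defers the non-integer case to Lemma~\ref{lem:2}, so splicing your proof into the paper as written would create a circular dependency. This is repairable rather than fatal --- the inputs you actually need, $\floor{\mu}^\nu/\lambda\to 0$ and $\lambda/(\ceil{\mu}+1)^\nu\to 0$, follow for non-integer $\mu$ by running the paper's test-point argument for Lemma~\ref{lem:1} verbatim with Result~\ref{res:2} in place of Result~\ref{res:1} --- but you should either supply that step or note that you are assuming an independent proof of Lemma~\ref{lem:1} on the non-integer case.
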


\section{Practical implications}
\subsection{Functional independence of parameters $\mu$ and $\nu$}
It is already known that the mean $\mu \ge 0$ and dispersion $\nu \ge 0$ in the mean-parametrized CMP distribution  are orthogonal \citep[][Result 2]{Huang:2017}. Proposition~\ref{eq:prop1} demonstrates that the two parameters are also functionally independent. This makes it unique amongst other generalizations of the Poisson distribution, including the generalized Poisson \citep{Consul:1989} as implemented in the \texttt{VGAM} package \citep{Yee:2020}, hyper-Poisson \citep{SC:2013}, extended Poisson-Tweedie \citep{BJKHD:2018}, Bernoulli-geometric \citep[BerG, as implemented in][]{Matheus:2020}, and the original CMP \citep{SMKBB:2005} as implemented in \texttt{COMPoissonReg} \citep{SS:2019}, all of which place restrictions on one  parameter based on the value(s) of the other parameter(s) -- see column 2 of Table~\ref{tab:1}. The mean-parametrized CMP is therefore the only model that can be fit to any count dataset regardless of the combination of mean and dispersion exhibited by the data.


Consider a simple set of counts $ \boldsymbol{y} = (26, 27, 27, 28, 28, 28, 28)^\top$ with sample mean 27.43 and variance 0.62, which is severely underdispersed for discrete data. The ``perfect" model fit here is given by the empirical distribution with $\hat p_{26} = 1/7, \hat p_{27} = 2/7$ and $\hat p_{28} = 4/7$, which attains the highest possible log-likelihood of $-3.758$ and lowest possible AIC of 11.52. For comparisons, the maximum likelihood estimates for each of the above models, along with their fitted means, variances and AICs, are given in Table~\ref{tab:1}. 

\begin{table}
\begin{center}
\scriptsize
\begin{tabular}{llllc}
\hline
Model & restrictions & MLE &  fitted values & AIC \\
\hline
generalized Poisson$^{[1]}$ & $\max(-1, -\theta/m) \le \lambda \le 1,$ &
$\hat \lambda = -0.999,$ & $\hat \mu = 28.92$  & 33.53 \\
\quad $\lambda \in (-1, 1), \theta > 0$ & where $m$ is the largest integer & $\hat \theta = \ 57.834$ & $\hat \sigma^2 =  7.23$ \\
& satisfying $\theta + m\lambda >0$ if $\lambda < 0$ & \\
hyper-Poisson$^{[1]}$ & $\lambda \ge \min\{\mu, \max(\mu + (\gamma -1), \gamma \mu \}$ & $\hat \lambda = 26.43$ & $\hat \mu = 27.43$ & 39.97 \\ 
\quad $\lambda > 0, \gamma > 0$ & \& $\lambda \le \max\{\mu, \min(\mu+(\gamma-1), \gamma \mu) \}$ & $\hat \gamma = 3.96\times 10^{-13}$ & $\hat \sigma^2 = 26.43 $\\
BerG & $\phi > |\mu-1|$ & $\hat \mu = 27.43,$ & $\hat \mu =27.43$  & 64.10 \\
\quad $\mu > 0$  $\phi > 0$ & & $\hat \phi = 26.43$  & $\hat \sigma^2 =724.90$ \\
Poisson-Tweedie$^{[2]}$ & $\phi > - \mu^{(1-p)}$ & \multicolumn{1}{l}{does not exist} & --- & --- \\
\quad $\mu > 0, p \ge 0,  \phi$ & \\
CMP$^{[1]}$ & $\lambda > [(\nu-1)/2\nu]^\nu$ & \multicolumn{1}{l}{did not converge} & --- & ---\\
\quad $\lambda \ge 0 , \nu \ge 0$ \\
mean-parametrized CMP & none & $\hat \mu = 27.43,$ & $\hat \mu = 27.43$  & 19.48 \\
\quad $\mu \ge 0, \nu \ge 0$ & &  $\hat \nu = 52.26$ & $\hat \sigma^2= \ 0.53$ \\
\hline
\end{tabular}
\end{center}
\vspace{-5mm}
\caption{Parameter spaces, restrictions and estimates, along with fitted means, variances and AIC values, of competing count distributions applied to the dataset $ \boldsymbol{y} = (26, 27, 27, 28, 28, 28, 28)^\top$ with sample mean $\bar{\boldsymbol{y}} = 27.43$, sample variance ${\rm var}(\boldsymbol{y}) = 0.62$, and best possible AIC of 11.52. Notes: ${[1]}$ susceptible to convergence issues at the boundary of parameter space; ${[2]}$ extended Poisson-Tweedie pmf does not exist when underdispersed ($\phi \le 0$).}
\label{tab:1}
\end{table}

We see that while all models fit the mean of the data well, only the mean-parametrized CMP can simultaneously adapt to the severe underdispersion exhibited by the data, attaining an AIC that is closest to the lowest possible value. This is because the strong functional dependence of parameters in the other models restricts the level of underdispersion allowed -- in particular, the larger the mean count the less underdispersion is permissible. For example, the most underdispersed hyper-Poisson distribution is obtained by taking $\gamma \to 0$ which implies that the smallest possible variance is $\mu - 1$ for any mean $\mu > 1$.  Thus, for a mean of 27.43 the most underdispersed hyper-Poisson distribution has variance 26.43. None of the other distributions fare any better: the underdispersed generalized Poisson pmf does not necessarily sum to 1, the BerG distribution simply cannot be underdispersed if mean $\mu \ge 2$, and the underdispersed extended Poisson–Tweedie pmf does not even exist (!).

\subsection{Generating underdispersed counts}

The last point above also implies that the mean-parametrized CMP distribution is the only candidate amongst these models that remains a full probability model on the non-negative integers over its entire parameter space. 
It is therefore the only candidate that can also be used to
%
generate arbitrarily underdispersed counts, which is particularly useful for simulation studies as in \cite{FGD:2019}.

\subsection{Improved computation speed for evaluating the CMP distribution}
The bounds given in Lemmas~\ref{lem:1} and~\ref{lem:2} provide a tight range in which to numerically search for $\lambda$ given the mean and dispersion. This is especially useful in practice because the problem of solving for both the rate $\lambda$ and the normalizing function $Z$ becomes computationally demanding with increasingly small underdispersion (i.e., large $\nu$).  Practically speaking we have found that these bounds already hold for $\mu \ge 1$ and $\nu \ge 1$, and using them reduces computation time by at least one order of magnitude from the original implementation in the \texttt{mpcmp} package \citep{FAWH:2020}. These bounds also allow for linear interpolation in $\nu$ and log-linear interpolation in $\mu$ when $\log \lambda$ is evaluated on a grid, allowing for fast, approximate updates for Bayesian calculations. 

\subsection{Second-order consistent discrete kernel smoothing}
\cite{KK:2011} defined the concept of a second-order discrete associated kernel function $f_{xh}(\cdot)$ as a discrete analogue to continuous kernel functions satisfying
$$
\lim_{h \to 0} E(f_{xh}) = x \quad \text{ and } \quad  \lim_{h \to 0} {\rm Var}(f_{xh}) = 0
$$
for every integer $x \in \mathbb{N}$, where $h \ge 0$ is a bandwidth parameter that acts like the variance in a Gaussian kernel smoother. The second condition here is precisely the requirement that the class of distributions $f_{xh}(\cdot)$ can be arbitrarily underdispersed, which is needed for constructing consistent discrete kernel smoothers. Proposition~\ref{eq:prop1} implies that the mean-parametrized CMP is one such example of a second-order discrete associated kernel, making it a natural candidate for constructing consistent discrete kernel smoothers for count data \citep[see][]{HSF:2020}. In fact, the mean-parametrized CMP distribution is currently the the only non-trivial discrete distribution satisfying these requirements -- the other two examples 
in the literature 
being the ``trivial" (unsmoothed) histogram and triangular kernel smoother of \cite{KKZ:2007}. 

\section{Conclusion}
The CMP distribution can handle arbitrarily small underdispersion when parametrized via its mean, with the limiting distribution (either a single probability mass or a shifted Bernoulli) being the most underdispersed possible for any discrete distribution. It is currently the only known generalization of the Poisson distribution possessing this property. 
The practical implications of this result add to the increasingly strong case for the CMP distribution to be the default model for underdispersed counts. Thus, we propose that all generalizations of the Poisson distribution be tested against this property. 

Future research into the rates of convergence in Proposition~\ref{eq:prop1}, as well as the behaviour of {\it sums} of independent mean-parametrized CMP random variables (which form a continuous bridge between the overdispersed negative-binomial, equidispersed Poisson and (arbitrarily) underdispersed shifted Binomial or single point mass distributions), are also warranted.

\section*{Acknowledgements} The author thanks Lucas Sippel (UQ) and Thomas Fung (Macquarie) for discussions leading to the writing of this paper, and Thomas Yee (Auckland) for insightful comments that much improved the paper.

\footnotesize

\newpage
\appendix
\normalsize

\section{Proof of Lemmas 1 and 2}
\label{app:A}
For the rate $\lambda = \lambda(\mu, \nu)$ to vary with the dispersion $\nu$ such that the mean $\mu$ remains fixed, it must satisfy the mean constraint,
\begin{equation}
\label{eq:mconstraint}
0 = \sum_{y=0}^\infty (y -\mu) \frac{\lambda^y}{(y!)^\nu} \ .
\end{equation}
Note that setting $\lambda = \lambda(\mu, \nu)$ in the pmf (\ref{eq:pmf}) leads to the mean-parametrized CMP distribution of \cite{Huang:2017}. The following result is then used  to establish the bounds on $\lambda(\mu, \nu)$ given by Lemma~\ref{lem:1} for the case of integer $\mu$; the result for non-integer $\mu$ is covered by Lemma~~\ref{lem:2} which is proven later. The proof of Result~\ref{res:1} is given in \ref{app:B} of this supplement.

\begin{Result}
Fix an integer $\mu \ge 1$. Then the function $\mu^y/y!$ is
\begin{tight_enumerate}
\item  strictly increasing from $y=0$ to $\mu-1$ ;
\item strictly decreasing from $y = \mu$ to $\infty$ ;
\item strictly less than 1 for $y \ge 2 \mu^2$.
\end{tight_enumerate} 
\label{res:1}
\end{Result}

\begin{proof}[Proof of Lemma~\ref{lem:1} for integer $\mu$]
First, note that for each $\nu$ the CMP is a linear exponential family with canonical parameter $\log \lambda$ \citep[see][Section 3.2]{SMKBB:2005}. By properties of exponential families, the mean is a monotonic function of the canonical parameter and therefore a monotonic function of $\lambda$ also. Thus, the mean constraint~(\ref{eq:mconstraint}) has (at most) one solution $\lambda = \lambda(\mu, \nu)$ for each $\mu$ and $\nu$. 

Next, write $m(\lambda) = \sum_{y=0}^\infty (y-\mu) \lambda^y/(y!)^\nu$ so that the solution $\lambda = \lambda(\mu, \nu)$ is the root of $m$.  Consider evaluating $m$ at $\lambda = a \mu^\nu$ for some fixed $a > 0$. Writing out the summation in $m$ explicitly into three parts, one corresponding to $y \le \mu-1$, another for $ \mu+1 \le y \le 2\mu^2-1$, and one for $y \ge 2\mu^2$, gives
\begin{eqnarray*}
m(a \mu^\nu) &=& \underbrace{\sum_{y=0}^{\mu-1} (y - \mu) a^y \left[\frac{\mu^y}{(y!)}\right]^\nu}_{\text{negative part}} + \underbrace{\sum_{y=\mu+1}^{2\mu^2-1} (y - \mu) a^y \left[\frac{\mu^y}{(y!)}\right]^\nu}_{\text{positive part}} + \underbrace{\sum_{y=2\mu^2}^{\infty} (y - \mu) a^y \left[\frac{\mu^y}{(y!)}\right]^\nu}_{\text{remainder}} \ .
\end{eqnarray*}
As $\nu \to \infty$ each term in the remainder sum tends to 0 by Result~\ref{res:1}(iii). Moreover, using Result~\ref{res:1}(ii) and the monotone convergence theorem, the remainder sum tends to 0 and so is negligible for arbitrarily large $\nu$. 

By the strictly increasing property in Result~\ref{res:1}(i), for arbitrarily large $\nu$ the negative part is dominated by its last term $y=\mu-1$ in the sum, which is 
$$
- a^{\mu-1} \left[\frac{\mu^{\mu-1}}{(\mu-1)!}\right]^\nu \ .
$$
Similarly, by the strictly decreasing property in Result~\ref{res:1}(ii), for arbitrarily large $\nu$ the positive part is dominated by its first term $y = \mu+1$ in the sum, which is
$$
+ a^{\mu+1} \left[\frac{\mu^{\mu+1}}{(\mu+1)!}\right]^\nu \ .
$$

Thus for arbitrarily large $\nu$ the sign of $m(a \mu^\nu)$ is determined by the sign of the sum of the two dominant terms,
$$
a^{\mu+1} \left[\frac{\mu^{\mu+1}}{(\mu+1)!}\right]^\nu - a^{\mu-1} \left[\frac{\mu^{\mu-1}}{(\mu-1)!}\right]^\nu \ .
$$
By considering the ratio of these two terms, 
$$
\frac{a^{\mu+1} \left[\frac{\mu^{\mu+1}}{(\mu+1)!}\right]^\nu}{a^{\mu-1} \left[\frac{\mu^{\mu-1}}{(\mu-1)!}\right]^\nu} = a^2 \left[ \frac{\mu}{\mu+1} \right]^\nu,
$$
we see that for any fixed $a > 0$, $\nu$ can be chosen sufficiently large so that this ratio is less than 1. Conclude that for any $a > 0$, $m(a \mu^\nu)$ is negative for sufficiently large $\nu$.

Now consider $\lambda = b (\mu+1)^\nu$ for some fixed $b>0$. By analogous arguments, the sign of $m(b (\mu+1)^\nu)$ is determined by the sign of the sum of the two dominant terms 
$$
b^{\mu+1} \left[\frac{(\mu+1)^{\mu+1}}{(\mu+1)!}\right]^\nu - b^{\mu-1} \left[\frac{(\mu+1)^{\mu-1}}{(\mu-1)!}\right]^\nu \ .
$$
Considering the ratio of these two terms, 
$$
\frac{b^{\mu+1} \left[\frac{(\mu+1)^{\mu+1}}{(\mu+1)!}\right]^\nu}{b^{\mu-1} \left[\frac{(\mu+1)^{\mu-1}}{(\mu-1)!}\right]^\nu} = b^2 \left[ \frac{\mu+1}{\mu} \right]^\nu \ ,
$$
we see that for any fixed $b > 0$, $\nu$ can be chosen sufficiently large so that this ratio is larger than 1. Conclude that for any $b > 0$, $m(b(\mu+1)^\nu)$ is positive for sufficiently large $\nu$. 

Thus, for any $a, b > 0$ the solution $\lambda = \lambda(\mu, \nu)$ of mean constraint $m(\lambda) = 0$ is bounded between $a\mu^\nu$ and $b (\mu+1)^\nu$ for sufficiently large $\nu$. Hence, it must be that $\lambda/\mu^\nu \to \infty$ and $\lambda/(\mu+1)^\nu \to 0$, which proves Lemma~\ref{lem:1} for integer $\mu$. The result for non-integer $\mu$ is covered by Lemma~\ref{lem:2} below.
\end{proof}

To show Lemma~\ref{lem:2}, we use the following analogue to Result~\ref{res:1}. The proof of Result~\ref{res:2} is essentially identical to Result~\ref{res:1} and is omitted.

\begin{Result}
Fix a non-integer $\mu > 0$. Then the function $\mu^y/y!$ is
\begin{tight_enumerate}
\item strictly increasing from $y=0$ to $\floor{\mu}$
\item strictly decreasing from $y=\ceil{\mu}$ to $\infty$
\item strictly less than 1 for $y \ge 2 \ceil{\mu}^2$
\end{tight_enumerate}
\label{res:2}
\end{Result}

\begin{proof}[Proof of Lemma~\ref{lem:2}] Consider evaluating $m$ at $\lambda = \Delta \ceil{\mu}^\nu$, where $\Delta = \mu - \floor{\mu}$ and $1-\Delta = \ceil{\mu} - \mu$ are the fractional parts of $\mu$. By the same steps as in the proof of Lemma~\ref{lem:1}, we can write $m( \Delta \ceil{\mu}^\nu)$ as a sum of its negative part ($y \le \floor{\mu}$), positive part ($\floor{\mu} \le y \le 2\ceil{\mu}^2-1$), and its remainder part ($y \ge 2\ceil{\mu}^2$). Using Result~\ref{res:2}, as in the proof of Lemma~\ref{lem:1}, the remainder component tends to 0 for large $\nu$, the negative part is dominated by its last term ($y = \floor{\mu}$), and the positive part of the sum is dominated by its first term ($y=\ceil{\mu}$). Thus, the sign of $m( \Delta \ceil{\mu}^\nu)$ is determined by the sum of its dominant positive and negative terms,
$$
(1-\Delta) \Delta^{\ceil{\mu}} \left[\frac{\ceil{\mu}^{\ceil{\mu}}}{\ceil{\mu}!}\right]^\nu - \Delta \Delta^{\floor{\mu}} \left[\frac{\ceil{\mu}^{\floor{\mu}}}{\floor{\mu}!}\right]^\nu \ .
$$
Evaluating the the ratio of these two terms gives
$$
\frac{(1-\Delta) \Delta^{\ceil{\mu}} \left[\frac{\ceil{\mu}^{\ceil{\mu}}}{\ceil{\mu}!}\right]^\nu}{\Delta \Delta^{\floor{\mu}} \left[\frac{\ceil{\mu}^{\floor{\mu}}}{\floor{\mu}!}\right]^\nu} = (1-\Delta) < 1 .
$$
Conclude that $m( \Delta \ceil{\mu}^\nu)$ is negative for sufficiently large $\nu$.

Now consider evaluating $m$ at $\lambda = (1-\Delta)^{-1} \ceil{\mu}^\nu$. By the same arguments as in before, the sign of $m((1-\Delta)^{-1} \ceil{\mu}^\nu)$ is determined by the sign of the sum of its dominant negative and positive terms,
$$
(1-\Delta) (1-\Delta)^{-\ceil{\mu}} \left[\frac{\ceil{\mu}^{\ceil{\mu}}}{\ceil{\mu}!}\right]^\nu - \Delta (1-\Delta)^{-\floor{\mu}} \left[\frac{\ceil{\mu}^{\floor{\mu}}}{\floor{\mu}!}\right]^\nu \ .
$$
Evaluating the the ratio of these two terms gives
$$
\frac{(1-\Delta) (1-\Delta)^{\floor{\mu}} \left[\frac{\mu^{\ceil{\mu}}}{\ceil{\mu}!}\right]^\nu}{\Delta (1-\Delta)^{\ceil{\mu}} \left[\frac{\mu^{\floor{\mu}}}{\floor{\mu}!}\right]^\nu} = \frac{1}{\Delta} > 1 \ .
$$
Conclude that $m((1- \Delta)^{-1} \ceil{\mu}^\nu)$ is positive for sufficiently large $\nu$. Hence for non-integer $\mu$ the solution $\lambda = \lambda(\mu, \nu)$ of mean constraint $m(\lambda) = 0$ must be between  $\Delta \ceil{\mu}^\nu$ and $(1-\Delta)^{-1} \ceil{\mu}^\nu$ for sufficiently large $\nu$.
\end{proof} 

\subsection{Proof of Result~\ref{res:1}}
\label{app:B}
To show Result~\ref{res:1}(i) and (ii), consider the derivative with respect to $y$ of $\log(\mu^y/y!) = y \log \mu - \log \Gamma(y+1)$, which is given by $\log \mu - \psi(y+1)$ where $\psi(\cdot) = \Gamma'(\cdot)/\Gamma(\cdot)$ is the digamma function. By known inequalities, 
\begin{eqnarray*}
\psi(y+1) \le  \log(y+1) - \frac{1}{2(y+1)}  \quad \text{ and } \quad \psi(y+1) \ge \log(y + 1/2)  \quad \text{ for all $y \ge 0$} \ ,
\end{eqnarray*}
the derivative is therefore positive for $0 \le y \le\mu-1$ and negative for $y\ge \mu$, which establishes these two results.

To show Result~\ref{res:1}(iii), set $y = 2 \mu^2$ and note that
\begin{eqnarray*}
\log(y!) &>& \underbrace{\log(\mu) + \log(\mu+1) + \ldots + \log(\mu^2 -1)}_{\mu^2-\mu \text{ terms}} +\underbrace{\log(\mu^2) + \log(\mu^2 +1) + \ldots + \log(2\mu^2)}_{\mu^2 + 1 \text{ terms}} \\
& > & (\mu^2 - \mu) \log(\mu) + (\mu^2+1) \log(\mu^2)  \\
&=& (3 \mu^2 - \mu +2) \log(\mu)
\end{eqnarray*}
Hence for  $y = 2 \mu^2$ we have
\begin{eqnarray*}
y \log (\mu) - \log(y!) &<& 2 \mu^2 \log(\mu) - (3 \mu^2 - \mu + 2)   \log(\mu) \\
&=& - (\mu^2 - \mu + 2) \log(\mu) \\
& \le & 0 \ , \quad \text{ for any integer } \mu \ge 1.
\end{eqnarray*}
Conclude that $\mu^y/y! < 1$ strictly for all $ y \ge 2 \mu^2$ from the monotonicity  property in Result~\ref{res:1}(ii). The proof of Result~\ref{res:2} is analogous and therefore omittted.

\section{Proof of Proposition~\ref{eq:prop1}}
\label{app:C}
When $\mu$ is integer, applying Lemma~\ref{lem:1} to the ratio of successive probabilities~(\ref{eq:successive}) gives
$$
\frac{P(Y = \mu - 1)}{P(Y = \mu)} = \frac{\mu^\nu}{\lambda} \to 0 \quad \text{ and} \quad \frac{P(Y = \mu)}{P(Y = \mu+1)} = \frac{(\mu+1)^\nu}{\lambda}\to \infty \quad \text{ as } \nu \to \infty \ .
$$
Hence, the probability at $\mu$ dominates all probabilities to the left and right of $\mu$. Because the total probability must sum to 1, it must be that $P(Y = \mu) \to 1$ and all other probabilities limit to 0.

Similarly, when $\mu$ is non-integer, applying Lemma~\ref{lem:1} or Lemma~\ref{lem:2} to the ratio of successive probabilities~(\ref{eq:successive}) gives
$$
\frac{P(Y = \floor{\mu} - 1)}{P(Y = \floor{\mu})} \to 0 \quad \text{ and} \quad \frac{P(Y = \ceil{\mu})}{P(Y = \ceil{\mu}+1)} \to \infty \quad \text{ as } \nu \to \infty \ ,
$$
implying that the probability at $\floor{\mu}$ dominates probabilities to the left of it and the probability at $\ceil{\mu}$ dominates probabilities to the right of it. Hence, the limiting distribution can have, at most, two non-zero probabilities at the values  $\floor{\mu}$ and $\ceil{\mu}$. Finally, applying Lemma~\ref{lem:2} to the ratio of successive probabilities at $\floor{\mu}$ and $\ceil{\mu}$ gives
$$
1-\Delta < \frac{P(Y = \floor{\mu})}{P(Y = \ceil{\mu})} < \frac{1}{\Delta} \ , \quad \text{ for sufficiently large } \nu \ ,
$$
which implies that the ratio $P(Y = \floor{\mu})/P(Y = \ceil{\mu})$ must converge to some constant as $\nu \to \infty$. Thus, it must be that $P(Y = \floor{\mu}) \to 1-\Delta$ and $P(Y = \ceil{\mu}) \to \Delta$ for the mean to remain fixed at $\mu$, and so the ratio $P(Y = \floor{\mu})/P(Y = \ceil{\mu})$ converges to $(1-\Delta)/\Delta$.

\end{document}